\documentclass[leqno, letterpaper, 11pt]{article}
\usepackage{graphicx} 
\usepackage{amssymb}
\usepackage{amsmath}
\usepackage{amsthm}
\usepackage{tikz}
\usepackage{fullpage}
\usepackage{mathrsfs}
\usepackage{subfigure}
\usepackage{verbatim}
\usepackage{enumitem}
\usepackage{bbm}
\usepackage{tikz}
\usetikzlibrary{through,calc,positioning,decorations.pathreplacing, arrows.meta}
\usepackage{authblk}


\newcommand{\bZ}{{\mathbb Z}}

\newcommand{\cC}{{\mathcal C}}

\newcommand{\cU}{\mathcal U}

\numberwithin{equation}{section}
     


\newtheorem{theorem}{Theorem}[section]

\newtheorem{lemma}[theorem]{Lemma}



\newcommand{\prob}[1]{\mathbb{P}\left(#1\right)}

\definecolor{ashgrey}{rgb}{0.7, 0.75, 0.71}
\definecolor{faint}{rgb}{0.9, 0.9, 0.9}
\definecolor{lgrey}{rgb}{0.85, 0.85, 0.85}
\definecolor{dgrey}{rgb}{0.5, 0.5, 0.5}
\definecolor{mgrey}{rgb}{0.78, 0.78, 0.78}
\definecolor{lred}{rgb}{1, 0.8, 0.8}
\definecolor{dred}{rgb}{0.7, 0, 0}

\newcommand{\new}[1]{{\color{black}#1}}

\title{Polluted Modified Bootstrap Percolation}

\author[1]{Janko Gravner}
\author[2]{Alexander Holroyd}
\author[3]{Sangchul Lee}
\author[4]{David Sivakoff}
\affil[1]{Department of Mathematics, University of California, Davis, 
{\tt gravner@math.ucdavis.edu}}
\affil[2]{Department of Mathematics, University of Bristol, {\tt a.e.holroyd@bristol.ac.uk}}
\affil[3]{Computational Science Research Center, Korea Institute of Science and Technology,
{\tt sangchul87.lee@gmail.com}}
\affil[4]{Departments of Statistics and Mathematics, The Ohio State University, {\tt dsivakoff@stat.osu.edu}}

\date{\today}

\begin{document}

\maketitle
\begin{abstract}
In the polluted modified 
bootstrap percolation model, sites in the square lattice are independently initially occupied with probability $p$ or closed with
probability $q$.  A site
becomes occupied at a subsequent step if it is not closed and has at least one occupied nearest
neighbor in each of the two coordinates. We study the final density of occupied sites when $p$ and $q$ are both small. We show that 
this density approaches $0$ if $q\ge Cp^2/\log p^{-1}$ and $1$ 
if $q\le p^2/(\log p^{-1})^{1+o(1)}$. Thus we establish a logarithmic  correction in the critical scaling, which is known not to be present in
the standard model\new{, settling a conjecture of Gravner and McDonald from 1997}. 
\end{abstract}
\section{Introduction}

Bootstrap percolation has been extensively studied
as a model for nucleation and metastability. In the version 
considered in this paper, each site in the lattice $\bZ^2$
is in one of the three states: occupied (also called infected), 
closed (immune), or open (susceptible). 
Initially, the state of each vertex is chosen independently 
at random: occupied with probability $p$ and closed with 
probability $q$\new{, where $p+q\le 1$}.
We will assume throughout that the probability parameters $p$ and $q$ are small. After the initial configuration is chosen, it evolves according to a simple deterministic rule in which occupied and closed sites never change their states, but an open site may become occupied. In the {\it standard bootstrap percolation\/}, an open site at time $t$ becomes occupied at time $t + 1$ if and only if at least 2 of its 4 nearest neighbors are occupied at time $t$. In the 
{\it modified\/} version, this transition is further 
restricted: at each step, every open site that has at least one occupied nearest neighbor to the east or west and at least one occupied nearest neighbor to the north or south becomes occupied.

The standard \new{``unpolluted''} case with $q=0$, a variant of which was introduced in \cite{CLR}, has played a central role in the study of spatial dynamics in random settings. The first rigorous result, proved in \cite{vE}, established that, as soon as $p>0$, every site of $\bZ^2$
is eventually occupied with probability $1$. To probe the metastability 
properties of the model, a natural setting is to restrict the lattice 
to a finite square of large side length $n$. This lead to
increased understanding of the mechanism for nucleation and the 
exponential scaling function is now known  to a remarkable precision 
\new{[AL, Hol, GHM, HM, HT]}. Many variations were also extensively studied, e.g.~\cite{BBDM, BDMS, BBMS1, BBMS2}. A brief survey cannot do justice to the variety and 
depth of the results in this area, 
so we refer to the review \cite{Mor}
for further background. 

The ``polluted'' standard case with $q>0$ was introduced in 
\cite{GM}. Here, the main focus is the final density of occupied sites. The main result of \cite{GM} is that there exist constants $c,C>0$ so that when $(p,q)\to (0,0)$, 
\begin{equation}\label{eq:GM}
\prob{\text{the origin is occupied}}\to
\begin{cases}
1 & \text{if } q<cp^2\\
0 & \text{if } q>Cp^2\\
\end{cases}
\end{equation}
Rigorous extensions of this result have been relatively scarce, but  
higher-dimensional cases \new{\cite{GraH, GHS, DYZ}} and general two-dimensional models \cite{Gho} have been addressed recently.

 \begin{figure}[ht!]
     \centering
     \includegraphics[width=0.4\linewidth]{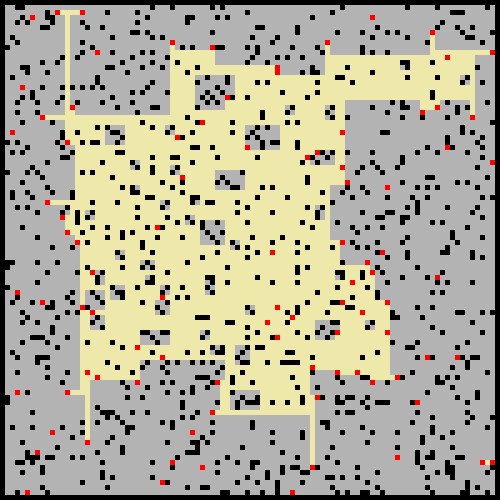}
     \caption{The dynamics with $p=0.1$, $q=0.01$ attempts to fill in a square.  Black sites are initially occupied, including all sites on the boundary, grey sites are eventually occupied, and red sites are closed.}
     \label{fig:sim}
 \end{figure}

For the modified model, the argument  
in \cite{GM}, which proves (\ref{eq:GM}) for the standard rule, allows the possibility 
of a logarithmic correction in the scaling (see also open 
problem (2) in \cite{GHS}). In this paper, we 
confirm that, indeed, $p^2$ in (\ref{eq:GM}) needs to be replaced 
by $p^2/\log p^{-1}$, although we have to allow a possible correction power of 
$\log\log p^{-1}$ in the lower bound. It is unclear whether 
such a power can be removed, and we leave this as a natural open question.
While the modified model is in many senses
similar to the standard one, notably in the dominant exponential term in the nucleation scaling \cite{Hol1}, 
there are important differences. The recent result on the scaling correction \cite{Har} is one example, and the present 
paper provides another. The cause is illustrated in Figure~\ref{fig:sim}, 
where narrow ``chimneys'' --- one site wide segments of open sites, with closed sites at their ends --- block the inward progress of occupation. 
\new{Unsurprisingly, the
chimney phenomenon is closely reflected in our proofs.}
In the standard rule, occupation can easily invade such chimneys. We now formally state our results.

\begin{theorem}\label{thm:ub}
    There exists a constant $C>0$ so that if $q\ge Cp^2 / \log (1/p)$, then
    \new{
    \begin{equation}\label{eq:ub}
        \lim_{q\to 0} \mathbb{P}_{p,q}\bigl((0,0) \text{ is eventually occupied}\,\bigr) = 0.
    \end{equation}
    }
\end{theorem}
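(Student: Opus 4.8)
The plan is to run the Aizenman--Lebowitz / Gravner--McDonald argument for the modified rule, using the chimney obstruction as the new ingredient that replaces the ``healing'' of closed-site shadows available in the standard model and that is responsible for the factor $1/\log(1/p)$ improvement over the standard threshold $q\asymp p^2$.

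\emph{Reduction to internally spanned rectangles.} Since the final state of a site with finite occupied component is decided by the dynamics in a large enough surrounding box, if $(0,0)$ is eventually occupied then it is initially occupied (probability $p\to0$) or it belongs to an internally spanned rectangle $R$ --- the monotone event, suitably phrased for the polluted setting, that the dynamics confined to $R$ spreads the initial occupied sites inside $R$ into a cluster that ``crosses'' $R$. An Aizenman--Lebowitz step --- stop the confined growth the first time the cluster reaches diameter $\ell$ --- extracts from any internally spanned rectangle of diameter $\ge 2\ell$ one of diameter in $[\ell,2\ell]$. So it suffices to bound $\mathbb{P}_{p,q}(R\text{ internally spanned})$ by a quantity that decays in $\operatorname{diam}(R)$ fast enough to beat the polynomially many rectangles of each dyadic diameter meeting a fixed neighbourhood of the origin, and that tends to $0$ uniformly in $R$ as $(p,q)\to(0,0)$ along $q=Cp^2/\log(1/p)$; this last point also excludes an infinite occupied cluster through the origin.

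\emph{The chimney obstruction.} The structural fact to exploit is that when, in the confined dynamics, an occupied rectangle of width $w$ takes in a new row to the north, that row is filled only on maximal stretches free of closed sites, and \emph{above each closed site in the row the whole column stays empty permanently} --- a chimney --- so the advancing front breaks into independent pieces of total width $\le w$; the same holds in the other three directions, and crucially a chimney is \emph{never} repaired: not by two fronts meeting above it, not by a stalled front being fed laterally, and not (within a single internally spanned rectangle, whose occupied sites all descend from one cluster) by occupation arriving from the opposite side. Consequently any internally spanned rectangle is produced by a ``fragmentation tree'' of sub-rectangles, in which each node advances in one coordinate --- and to fill the advanced rows or columns at all, each must contain an initial occupied site, forcing the advancing front to have width of order at least $1/p$ --- before the front splits at the next closed line it meets. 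In the standard rule this tree collapses, because a closed-site shadow heals one row later, so there is no clean-band requirement; here it does not, and pricing the tree correctly is the substance of the proof.

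\emph{Combining the scales and locating the $\log$.} Tracking the fragmentation tree, the probability of an internally spanned rectangle of diameter $\ell$ decomposes into a cost to assemble, purely from initial occupied sites, a droplet of the critical linear scale $\Theta(1/p)$ --- the unpolluted nucleation cost, of order $e^{-\Theta(1/p)}$ --- times a cost for the subsequent growth to scale $\ell$ through the polluted environment. The growth carries the correction: a front of critical width $\sim 1/p$ advances only a distance of order $1/(q\cdot\tfrac1p)=p/q=\tfrac1p\log\tfrac1p$ before meeting a closed line and fragmenting, losing a constant fraction of its width, which must then be rebuilt by perpendicular growth across a fresh closed-line-free band; it is precisely the factor $\log\frac1p$ by which this fragmentation length $p/q$ exceeds the critical width $1/p$ that $q$ must consume, and carrying the bookkeeping through the dyadic scales gives an estimate for $\mathbb{P}_{p,q}(R\text{ internally spanned})$ that, summed over $R$ and over scales as in the reduction, tends to $0$ as soon as $q\ge Cp^2/\log(1/p)$. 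This is what lets the estimate reach down to $q\asymp p^2/\log(1/p)$ rather than stopping at the standard value $q\asymp p^2$ of \cite{GM}.

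\emph{Main obstacle.} The hard part is making the chimney obstruction quantitative: ruling out every route by which the fragmentation tree could be short-circuited --- no chimney ever healing, no stalled front ever re-seeded by a distant initial cluster inside a single spanned rectangle --- while working with the genuinely non-rectangular, comb-within-comb occupied shapes that the polluted modified dynamics actually produces, rather than an idealized rectangle-with-a-slot; and extracting the \emph{right} power of $\log\frac1p$, neither too weak (which would undershoot the truth and contradict the companion lower bound) nor too strong. That analysis is vacuous in \cite{GM} and carries essentially the whole weight of the theorem.
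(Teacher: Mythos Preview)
Your approach is genuinely different from the paper's, and as written it has a real gap.

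The paper does not use an Aizenman--Lebowitz / internally-spanned-rectangle argument at all. Instead it runs a blocking/renormalization scheme: partition $\bZ^2$ into rectangular blocks of width $\sim \delta p^{-1}\log p^{-1}$ and height $\sim \epsilon p^{-1}$, declare a block \emph{safe} if it contains a closed site together with an empty vertical strip (the chimney) and an empty horizontal strip of the right dimensions, and show (Lemma~\ref{lemma:protection}) that a block is safe with probability $\ge 1-\eta$ once $q\ge Cp^2/\log p^{-1}$. The $\log$ arises transparently here: the block has area $\asymp p^{-2}\log p^{-1}$, so one closed site is expected when $q\cdot p^{-2}\log p^{-1}\asymp 1$; the width must be $\sim p^{-1}\log p^{-1}$ so that an empty horizontal strip of height $m$ exists among the $\sim p^{-1}$ candidate rows. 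Safe blocks are then stitched along a monotone path into a deterministic blocking structure (Lemma~\ref{lemma:blocking}), and Liggett--Schonmann--Stacey domination plus Lipschitz-percolation duality produces such a structure surrounding the origin with high probability. No tracking of growing clusters, no fragmentation tree, no dyadic summation.

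The gap in your proposal is the step you yourself flag as the main obstacle, and it is not merely a matter of bookkeeping. Your structural claim that a chimney is ``never repaired \ldots\ not (within a single internally spanned rectangle, whose occupied sites all descend from one cluster) by occupation arriving from the opposite side'' conflates two incompatible notions. If ``internally spanned'' means the usual thing --- the dynamics confined to $R$ fills $R$ --- then the occupied sites need \emph{not} descend from one cluster: two seeds can grow independently and merge, and the upper one can then feed its column down into the lower one's chimney, repairing it. If instead you restrict to single-cluster spanning, the Aizenman--Lebowitz extraction step fails: stopping the confined dynamics at diameter $\ell$ does not hand you a single-cluster-spanned sub-rectangle. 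Either way the fragmentation tree is not well-defined as stated, and this is precisely the mechanism the paper's blocking argument is designed to sidestep --- a safe block's chimney is protected by an \emph{a priori} empty strip, so no analysis of how clusters might conspire to repair it is needed.
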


\begin{theorem}\label{thm:lb}
    If 
    $$
    q\le \frac{p^2}{\log(1/p)(\log\log (1/p))^4},
    $$ 
    then
   \new{\begin{equation}\label{eq:lb}
       \lim_{p\to 0} \mathbb{P}_{p,q}\bigl((0,0) \text{ is eventually occupied}\,\bigr)= 1.
    \end{equation}
    }
\end{theorem}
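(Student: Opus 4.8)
The plan is a multi-scale renormalization. Fix $p$ small with $q$ in the stated range, introduce length scales $L_0 < L_1 < L_2 < \cdots$ growing quickly (so that only of order $\log\log(1/p)$ of them carry appreciable error in the recursion below), and for each $k$ define a \emph{good} $L_k$-block: an axis-parallel $L_k\times L_k$ square whose initial configuration is such that, under the modified dynamics on all of $\bZ^2$, the block ends up occupied apart from its closed sites and a further set of density $o(1)$. Writing $\pi_k$ for the probability that a fixed $L_k$-block is good, one has $\mathbb{P}_{p,q}\bigl((0,0)\text{ eventually occupied}\bigr)\ge \pi_k - o(1)$ for every $k$, so it suffices to prove $\pi_k\to 1$, uniformly enough in $p$.

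For the base case take $L_0$ to be a large fixed power of $1/p$ --- comfortably larger than $1/p$, so that a droplet of side $L_0$ already meets an initially occupied seed in every nearby row and column. Here ``good'' is in essence the unpolluted modified-bootstrap internal-spanning event for the block, intersected with the event that the block's closed sites are no more numerous than expected and reasonably spread out; by the classical spanning estimates this has probability $\pi_0>0$ (though exponentially small in $1/p$). Partitioning an $L_1$-block into $(L_1/L_0)^2$ disjoint $L_0$-sub-blocks, an $L_1$-block contains a good $L_0$-sub-block with probability at least $1-(1-\pi_0)^{(L_1/L_0)^2}$, already close to $1$ once $L_1/L_0$ is large.

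The inductive step is the heart of the matter. Given a good $L_k$-block, the droplet nucleated inside it expands as a roughly rectangular region that swallows successive rows and columns; because the droplet is much wider than $1/p$, every fresh row or column contains a seed with overwhelming probability, so occupation can be halted only when the closed sites organize into a \emph{chimney barrier} enclosing the droplet --- a one-site-wide corridor of open sites capped by closed sites, which occupation can neither enter nor route around --- together with the ``shadows'' such configurations leave behind as the droplet sweeps past. One needs an estimate
\[
    \mathbb{P}_{p,q}\bigl(\text{the closed sites form a chimney barrier somewhere in a given }L_{k+1}\text{-block}\bigr)\le \beta_k ,
\]
with $\sum_k\beta_k$ small, and the combinatorics of chimneys --- which are only one site wide, hence cheaper than the barriers that block the standard rule of \cite{GM} --- is exactly what shifts the threshold from $q\asymp p^2$ to $q\asymp p^2/\log(1/p)$: the $\beta_k$ turn out to be summable precisely in this regime, the extra $\log(1/p)$ reflecting the scale $\Theta(1/p)$ over which initially occupied seeds are the governing resource. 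One then closes the recursion $1-\pi_{k+1}\le(1-\pi_k)^{(L_{k+1}/L_k)^2}+(L_{k+1}/L_k)^2\,\beta_k$ and checks that $\pi_k\to 1$. The surplus $(\log\log(1/p))^4$ in the hypothesis is slack: it lets the scales grow fast enough that only $O(\log\log(1/p))$ levels carry appreciable error while each $\beta_k$, times the number of candidate barrier locations, stays small, and it absorbs the lower-order corrections in the seed-density and internal-spanning estimates that accumulate over these levels.

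The main obstacle is precisely this chimney-barrier estimate: one must pin down how a growing modified droplet is genuinely stopped by closed sites --- which shadows accumulate, which are healed by later seeds, which alignments of chimneys actually enclose a region --- and show that such configurations are rare enough to be summed over scales. Reproducing the logarithmic gain over \cite{GM} quantitatively, and choosing the ``good'' event so that it is at once local (hence estimable), self-reproducing across scales, and strong enough to force the block to be filled, is where the real work lies. Getting the power of $\log\log(1/p)$ down to its optimal value looks delicate --- the authors leave open whether any such power is needed --- so the plan is to be deliberately generous with these secondary factors and to concentrate effort on the chimney analysis.
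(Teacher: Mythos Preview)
Your multi-scale plan is plausible in outline, but it leaves its central step --- the chimney-barrier estimate --- unproved, and you yourself identify this as ``where the real work lies''. The paper's route is quite different and sidesteps that analysis entirely. It uses a \emph{single} scale $N\asymp (\log\log p^{-1})\cdot p^{-1}\log p^{-1}$ and calls a box \emph{good} via six explicit, easily estimable conditions on the initial configuration: closed sites are pairwise $\ell^\infty$-separated by $\Theta((\log\log p^{-1})/p)$, each has a nearby occupied seed in every cardinal direction and within the box, every row/column interval of length $3p^{-1}\log p^{-1}$ contains an occupied site, thin strips contain few closed sites, and closed sites avoid the boundary and do not share rows or columns. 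The decisive observation (Lemma~\ref{lemma:key}) is an elimination trick: in the modified rule, once all four neighbors of a closed site are occupied, that site is irrelevant to the dynamics off itself and may be treated as occupied. Lemma~\ref{lemma:spread} then shows that in a good box with one outside boundary interval already occupied, closed sites can be surrounded and eliminated one at a time until every non-closed site is occupied. Goodness has probability $\to 1$ under the hypothesis on $q$ by six direct union bounds (Lemma~\ref{lemma:good-boxes}), goodness is independent across boxes, and ordinary nearest-neighbor site percolation on the renormalized lattice --- together with the almost-sure existence of a fully occupied box somewhere in the infinite good cluster --- finishes the proof.

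By comparison, your scheme requires an internally-spanned seed at the base scale, a recursion over $O(\log\log p^{-1})$ levels, and a classification of the closed-site configurations that genuinely stop a growing modified droplet; none of this is needed. The factor $\log p^{-1}$ in the paper enters through the seed-density condition and the choice of $N$, not through barrier counting, and the $(\log\log p^{-1})^4$ is consumed by the union bounds in Lemma~\ref{lemma:good-boxes}. If you were to push your approach through, the elimination lemma is very likely what you would end up needing to make the inductive growth step work, at which point the multi-scale machinery becomes unnecessary.
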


\new{We prove these two theorems in the following two sections respectively.}

\section{Stopping the infection}\label{sec:ub}


Fix an odd integer $m\ge 1$ and \new{an integer $k\ge 2$}. 
Fix also $\epsilon, \delta>0$. 
Let \new{$M=\lfloor\delta p^{-1}\log p^{-1}\rfloor$} and 
    \new{$N=2m\lceil\epsilon/ (mp)\rceil$}. 
We partition 
the lattice into rectangles of width $M$
and height $N$, which we call {\em blocks}: for $z=(z_1,z_2)\in \bZ^2$, 
we let $R_z=(Mz_1,Nz_2)+([0,M-1]\times[0,N-1])$.


A block $R_z$ is called \new{{\it safe\/}} if there exists an $x$ in the upper half of $R_z$ so that:\new{
\begin{itemize}
\item[(S1)] $x$ is closed; 
\item[(S2)] there is a rectangle of width $m$ and height  $k\lceil\epsilon p^{-1}\rceil$, whose top edge is included in the top edge of $R_z$, contains $x$ within its middle vertical line and contains no initially occupied sites; and 
\item[(S3)] there is a rectangle of width $k\cdot \lfloor\delta p^{-1}\log p^{-1}\rfloor$ and height $m$, whose left edge is contained within the bottom half of the left edge of $R_z$, and contains no initially occupied sites.
\end{itemize}}

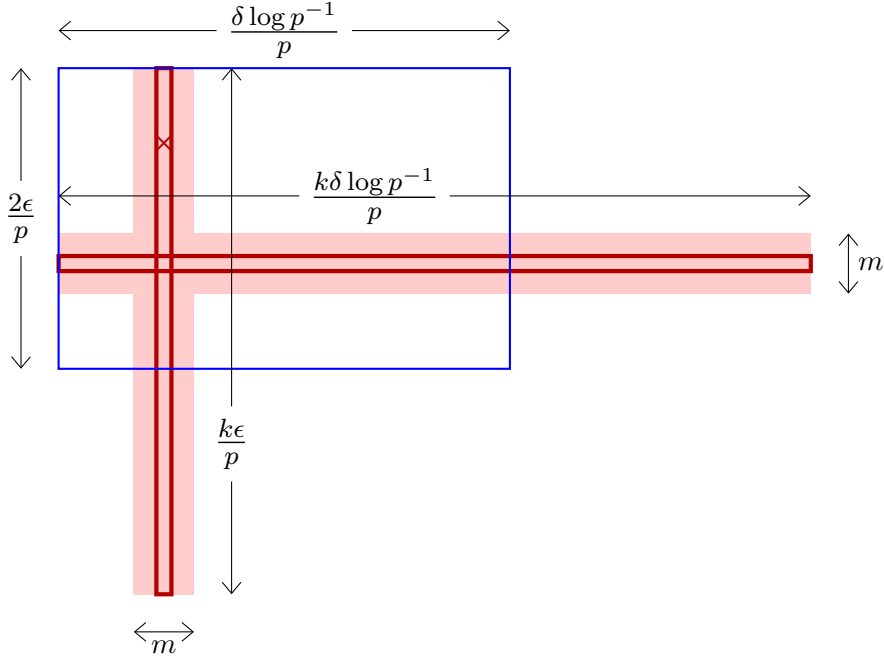
\begin{figure}
\begin{center}
\scalebox{1}{
\begin{tikzpicture}
\newcommand{\st}{Straight Barb[length=1.5mm, width=2.5mm]}

\fill[lred] (0,1) rectangle (10,1.8);
\fill[lred] (1,-3) rectangle (1.8,4);
\node[color=dred] at (1.4,3) {\large $\boldsymbol\times$};
\draw[color=dred, ultra thick](1+0.3,-3) rectangle (1.8-0.3,4);
\draw[color=dred, ultra thick](0,1.4-0.1) rectangle (10,1.4+0.1);
\draw[color=blue, thick](0,0) rectangle (6,4);
\draw [ \st-\st] (1,-3.5) -- (1.8,-3.5);
\node at (1.4,-3.7) {$m$};
\draw [ \st-\st] (10.5,1) -- (10.5,1.8);
\node at (10.8,1.4) {$m$};
\draw [ \st-] (0,2.3) -- (4.2-1,2.3);
\draw [ -\st] (4.2+1,2.3) -- (10,2.3);
\node at (4.2,2.3) {\scalebox{1.25}{$\frac{k\delta \log p^{-1}}{p}$}};
\draw [ \st-] (-0.5,0) -- (-0.5, 2-0.5);
\draw [ -\st] (-0.5,2+0.5) -- (-0.5, 4);
\node at (-0.5,2) {\scalebox{1.25}{$\frac{2\epsilon}{p}$}};
\draw [ \st-] (0,4.5) -- (3-0.9,4.5);
\draw [ -\st] (3+0.9,4.5) -- (6,4.5);
\node at (3,4.5) {\scalebox{1.25}{$\frac{\delta \log p^{-1}}{p}$}};
\draw [ \st-] (2.3,-3) -- (2.3, -1-0.5);
\draw [ -\st] (2.3,-1+0.5) -- (2.3, 4);
\node at (2.3,-1) {\scalebox{1.25}{$\frac{k\epsilon}{p}$}};
 
\end{tikzpicture}
}
\end{center}

\vspace{-0.5cm}

\caption{A \new{safe} block (outlined in blue) has a closed site marked by $\times$ and no occupied sites in the red protective region. Its core is outlined
by thicker red lines. \new{Dimensions of rectangles are given 
with integer parts omitted.}
} 
\label{fig:protection}
\end{figure}

If $R_z$ is \new{safe}, we call the union of the two rectangles from conditions \new{(S2)} and \new{(S3)} its {\em protective region}; the horizontal and vertical line segments  in the middle of the two rectangles together form its 
{\em core}; see Figure~\ref{fig:protection}.

\begin{lemma}\label{lemma:protection}
Fix $m$ and $k$, and any $\eta>0$. Then $\epsilon$, $\delta$ and a constant $C$ can be chosen so that, for any fixed $z$, 
$$\prob{R_z\text{ is \new{safe}}}\ge 1-\eta,$$
provided that 
$$
q\ge C\frac{p^2}{\log p^{-1}}
$$
and $p$ is small enough. 
\end{lemma}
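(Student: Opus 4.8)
The plan is to bound $\prob{R_z\text{ not safe}}$ by a union bound over two failure modes, each controlled by a short computation. Since condition (S3) makes no reference to $x$, the block $R_z$ is safe exactly when both of the following occur: the event $A$ that some $x$ in the upper half of $R_z$ satisfies (S1) and (S2), and the event $B$ that (S3) holds. Hence $\prob{R_z\text{ not safe}}\le\prob{A^{c}}+\prob{B^{c}}$, and it suffices to make each term at most $\eta/2$ once $p$ is small. It will emerge that $\epsilon$ may be any fixed positive constant (say $\epsilon:=1$) and $\delta$ any fixed number in $\bigl(0,\tfrac{1}{2mk}\bigr)$, with $C=C(m,k,\eta)$ chosen large at the end. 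Note $A$ and $B$ need not be independent --- their protective rectangles overlap in a corner of size $O(m)\times O(m)$ --- but the union bound does not require this; the only independence I use is within each term, and it comes from disjointness of the relevant sets of sites.

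For $\prob{A^{c}}$, I would partition the $M$ columns of $R_z$ into $\floor{M/m}$ disjoint groups of $m$ consecutive columns, calling a group \emph{good} if (i) the $m\times k\ceil{\epsilon p^{-1}}$ rectangle formed by those columns and the top $k\ceil{\epsilon p^{-1}}$ rows of $R_z$ contains no initially occupied site, and (ii) the middle column of the group has a closed site somewhere in the upper half of $R_z$. Since $k\ge2$, for small $p$ this rectangle contains the entire upper half of $R_z$, so a good group exhibits a valid $x$; hence $A^{c}$ forces every group to be non-good, and the events that distinct groups are good are independent, being measurable with respect to disjoint sets of sites. The one substantive computation is that, conditionally on (i), each site of the middle column is still independently closed with probability $q/(1-p)\ge q$, so
\[
\prob{\text{group good}}\;=\;(1-p)^{\,mk\ceil{\epsilon p^{-1}}}\Bigl(1-\bigl(1-\tfrac{q}{1-p}\bigr)^{N/2}\Bigr)\;\gtrsim\;e^{-2mk\epsilon}\,\min\!\bigl(q\epsilon/p,\,1\bigr).
\]
Multiplying over the $\floor{M/m}\ge\tfrac{\delta}{2m}\,p^{-1}\log p^{-1}$ groups and using $q\ge Cp^{2}/\log p^{-1}$: in the regime $q\epsilon/p\le1$,
\[
\prob{A^{c}}\;\le\;\exp\!\Bigl(-c\,e^{-2mk\epsilon}\,\floor{M/m}\,\tfrac{q\epsilon}{p}\Bigr)\;\le\;\exp(-c'C),
\]
with $c'=c'(m,k,\epsilon,\delta)>0$; in the regime $q\epsilon/p>1$ the factor $\prob{\text{group good}}$ is bounded below by a positive constant while $\floor{M/m}\to\infty$, so $\prob{A^c}\to0$. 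Taking $C$ large then gives $\prob{A^{c}}\le\eta/2$.

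For $\prob{B^{c}}$, I would take $\ceil{\epsilon/(mp)}$ disjoint rectangles of height $m$ and width $kM$, each with left edge a length-$m$ segment of the bottom half of the left edge of $R_z$, chosen so that these segments partition that bottom half; their ``no initially occupied site'' events are independent, and any one of them implies (S3). Each such event has probability $(1-p)^{\,mkM}=p^{\,mk\delta(1+o(1))}\ge p^{\,2mk\delta}$ for small $p$, so, since $2mk\delta<1$,
\[
\prob{B^{c}}\;\le\;\bigl(1-p^{\,2mk\delta}\bigr)^{\ceil{\epsilon/(mp)}}\;\le\;\exp\!\Bigl(-\tfrac{\epsilon}{m}\,p^{\,2mk\delta-1}\Bigr)\;\longrightarrow\;0
\]
as $p\to0$; thus $\prob{B^{c}}\le\eta/2$ for small $p$, and combining the two bounds proves the lemma.

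I expect the only genuinely load-bearing decision, as opposed to a single hard step, to be the calibration of $\delta$: it must be small enough that $2mk\delta<1$, so that the horizontal strip of width $\asymp\delta p^{-1}\log p^{-1}$ occurs among the $\asymp p^{-1}$ candidate vertical positions, yet the \emph{same} $\log p^{-1}$ factor that then appears in $M$ is precisely what lets the $\asymp M$ candidate vertical chimneys compensate for the $1/\log p^{-1}$ in the hypothesis $q\ge Cp^{2}/\log p^{-1}$ --- this cancellation is the origin of the logarithmic correction and is why no relation between $\epsilon$ and $\delta$ is needed beyond ``both fixed, $C$ large''. The remaining ingredients --- the elementary bounds $(1-p)^{1/p}\ge e^{-2}$ and $1-(1-q)^{n}\ge c\min(qn,1)$ for small $p$, and the integer-part bookkeeping guaranteeing that the rectangle in (i) contains the upper half of $R_z$ and that the stacked rectangles fit in the bottom half --- are routine.
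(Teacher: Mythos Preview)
Your proof is correct, and the treatment of (S3) is identical to the paper's. The difference lies in how you control the event $A=\{\exists\,x\text{ satisfying (S1) and (S2)}\}$.

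The paper argues in two sequential steps: first it shows that \emph{some} closed site $x$ exists in the upper half with probability at least $1-\eta/3$ (this is where the assumption $q\ge Cp^2/\log p^{-1}$ enters, via $\tfrac12|R_z|\,q\ge \tfrac12 C\epsilon\delta$), and then, conditional on a choice of such $x$, bounds the probability that the associated width-$m$ rectangle contains an occupied site by its area times $p$, namely $2mk\epsilon$. This forces the paper to take $\epsilon$ small depending on $\eta$, and the conditioning step (picking a closed $x$ whose location depends on the configuration) is handled somewhat informally.

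Your approach instead partitions the columns into $\lfloor M/m\rfloor$ disjoint width-$m$ groups and asks for a single \emph{good} group, exploiting independence across groups to get $\prob{A^c}\le(1-\pgood)^{\lfloor M/m\rfloor}$. This has two advantages: the independence is completely explicit (no conditioning on a configuration-dependent $x$), and $\epsilon$ need not be small---you absorb the factor $e^{-2mk\epsilon}$ into the constant $c'$ and compensate with $C$ large. The cost is a slightly longer computation. Both routes hinge on the same calibration $2mk\delta<1$ and the same cancellation of $\log p^{-1}$ between $M$ and the hypothesis on $q$, which you correctly identify as the load-bearing step.
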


\begin{proof} 
Note first that the probability that a fixed rectangle of dimensions 
in condition \new{(S2)} has an occupied site is at most
$$
\new{p\cdot m\cdot k\cdot \frac{2\epsilon}p=2mk\epsilon.}
$$
Therefore, the conditional probability that \new{(S2)} happens given that \new{(S1)} happens 
is at least $1-\eta/3$ for small enough $\epsilon$.
Further, for small $p$,
\begin{equation*}
\begin{aligned}
 \prob{\text{\new{(S3)} does not happen}}  &\le \left[1-(1-p)^{mk\delta p^{-1}\log p^{-1}}\right]^{\epsilon/(mp)}\\
 &\le 
 \left[1-\exp(-2mk\delta \log p^{-1})\right]^{\epsilon/(mp)}\\
 &=
 \left[1-p^{2mk\delta}\right]^{\epsilon/(mp)}\\
 &\le \exp\left[\frac \epsilon m p^{2mk\delta-1}\right],\\
 \end{aligned}
\end{equation*}
which goes to $0$ as $p\to 0$ provided that $2mk\delta<1$. Under this 
condition, \new{(S3)} therefore happens with probability at least $1-\eta/3$. 
Finally, 
$$
\frac 12|R_z|q\ge \new{\frac12}C\epsilon\delta, 
$$
and so \new{(S1)} happens with probability at least $1-\eta/3$ if $C$ is large 
enough. The claim follows.
\end{proof}

\begin{figure}
\begin{center}
\scalebox{0.65}{
\begin{tikzpicture}

\def\w{3}
\def\h{2}

\coordinate (x11) at (0,0);
\coordinate (x21) at (\w,0);
\coordinate (x22) at (\w,\h);
\coordinate (x31) at (2*\w,\h);
\coordinate (x41) at (3*\w,\h);
\coordinate (x42) at (3*\w,2*\h);
\coordinate (x43) at (3*\w,3*\h);
\coordinate (x51) at (4*\w,3*\h);
\coordinate (x61) at (5*\w,3*\h);

\path (x21) +(0.3*\w,0.7*\h) coordinate (c1);
\path (x21) +(0,0.3*\h) coordinate (c1p);
\coordinate (c1r) at (\w+0.3*\w,\h);
\path (x41) +(0.7*\w,0.8*\h) coordinate (c2);
\path (x41) +(0,0.4*\h) coordinate (c2p);
\coordinate (c2r) at (3*\w+0.7*\w,2*\h);
\path (x61) +(0.8*\w,0.6*\h) coordinate (c3);
\path (x61) +(0,0.2*\h) coordinate (c3p);
\coordinate (c3r) at (5*\w+0.8*\w,4*\h);

\draw[color=dred, line width=4]
  (\w+0.3*\w-1.3*\w,0.3*\h-\h)--(\w+0.3*\w,0.3*\h-\h);
  \draw[color=dred, line width=0.5]
  (\w+0.3*\w-1.3*\w,0.3*\h-\h)--(2*\w,0.3*\h-\h);

\draw[color=dred, line width=4]
 (\w+0.3*\w,0.7*\h)--(\w+0.3*\w,\h-3*\h);
\draw[color=dred, line width=4]
  (\w+0.3*\w,0.3*\h)--(3*\w+0.7*\w,0.3*\h);
\draw[color=dred, line width=4]
  (3*\w+0.7*\w,\h+0.8*\h)--(3*\w+0.7*\w,0.3*\h-1.3*\h);
\draw[color=dred, line width=4]
  (3*\w+0.7*\w,\h+0.4*\h)--(5*\w+0.8*\w,\h+0.4*\h);
  \draw[color=dred, line width=4]
  (5*\w+0.8*\w,\h+0.4*\h-0.4*\h)--(5*\w+0.8*\w,3*\h+0.6*\h);
  \draw[color=dred, line width=4]
  (5*\w+0.8*\w,3*\h+0.2*\h)--(5*\w+0.8*\w+\w,3*\h+0.2*\h);
  \draw[color=dred, line width=4]
  (5*\w+0.8*\w+\w,2*\h)--(5*\w+0.8*\w+\w,3*\h+\h);
  \draw[color=dred, line width=0.5]
  (5*\w+0.8*\w+\w,2*\h)--(5*\w+0.8*\w+\w,3*\h+\h);


\foreach \x in {x11, x21, x22,x31, x41,x42,x43,x51,x61}{
\draw[color=black, line width=0.5]
    (\x) + (0,0) rectangle +(\w,\h); 
}

\foreach \x in {c1,c2,c3}{
\node[color=dred] at (\x) {\huge$\boldsymbol\times$};
}

\draw[color=dred, line width=0.5]
  (c1p)+(0,0)-- +(3*\w,0);
\draw[color=dred, line width=0.5]
  (c1r)+(0,0)-- +(0,-3*\h);
\draw[color=dred, line width=0.5]
  (c2p)+(0,0)-- +(3*\w,0);
\draw[color=dred, line width=0.5]
  (c2r)+(0,0)-- +(0,-3*\h);
\draw[color=dred, line width=0.5]
  (c3p)+(0,0)-- +(3*\w,0);
  \draw[color=dred, line width=0.5]
  (c3r)+(0,0)-- +(0,-3*\h);

\end{tikzpicture}
}
\end{center}
\caption{\new{Safe} blocks are outlined in black\new{,} and red lines are the cores of their protective regions. Thicker red lines are the blocking structure. Even if all sites above the blocking structure are occupied, they cannot influence the configuration below the structure without help from \new{additional occupied sites below}.} 
\label{fig:blocking}
\end{figure}
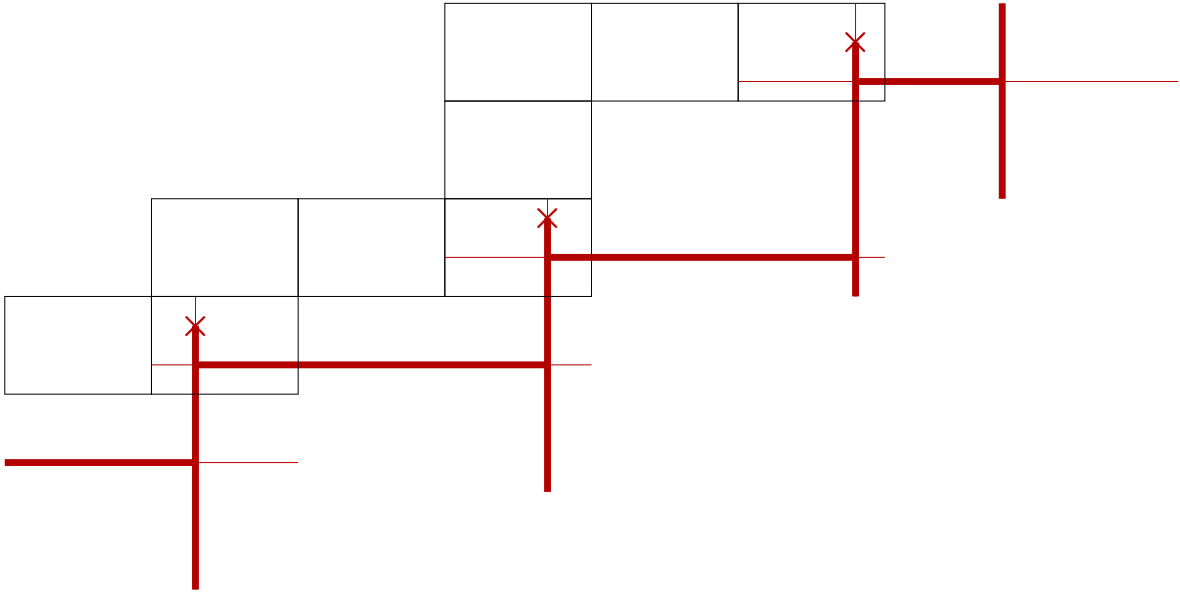

Assume that some fixed configuration $\xi_0$ has the following 
properties. 
There is a bi-infinite path $\ldots, z_{-1}, z_0, z_1, z_2,\ldots$ in $\bZ^2$ such that:
\begin{itemize}
    \item $R_{z_\ell}$ is a \new{safe block} for every $z_\ell$ in the path;
    \item 
    \new{the} path only makes horizontal or vertical steps in
positive directions: $z_{i+1}-z_i\in \{e_1,e_2\}$; and
    \item the path makes at most two consecutive steps in the same direction.
\end{itemize} 
We form a {\em blocking structure} from this path of \new{safe blocks} as follows. From the last box in each horizontal segment of the path, we include in the blocking structure the part of its core that lies (weakly) below and to the right of the closed site satisfying condition (1) of the \new{safe block}; see Figure~\ref{fig:blocking}. Let $A\subset\bZ^2$ be the region strictly above the blocking structure. Assume also $k\ge 3$, $m\ge 4$. \new{The following lemma encapsulates the key observation used to prove Theorem~\ref{thm:ub}. Its proof is similar to that of Proposition 3.2 in~\cite{GS2}, which is itself a simplified 2-dimensional version of Proposition 4 in~\cite{GHS}.}


\begin{lemma} \label{lemma:blocking} 
Under the above assumptions, convert the 
configuration in $A$ to entirely occupied and
to entirely closed, \new{respectively}, and run the two dynamics, obtaining the final configurations $\xi_o$ and $\xi_c$, \new{respectively}. Provided all connected clusters of 
occupied sites 
in $\xi_c$ have \new{$\ell^\infty$-diameter} at most $m/4$, \new{the configurations} $\xi_o$ and 
$\xi_c$ agree on $A^c$. 
\end{lemma}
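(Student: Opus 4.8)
The plan is to run the two dynamics side by side and show that the extra infection created by declaring $A$ occupied in the first run can never cross the blocking structure into $A^c$.

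I would begin with the soft reductions. Since turning a site occupied instead of closed only helps the infection, the modified dynamics is monotone; writing $O_o$ and $O_c$ for the sets of eventually occupied sites in the two runs, this gives $O_c\subseteq O_o$, and since $A$ stays closed throughout the second run $O_c\subseteq A^c$. The closed sites of $A^c$ are those of $\xi_0$ in both runs and never change, so $\xi_o$ and $\xi_c$ agree on $A^c$ precisely when $O_o\cap A^c=O_c$; hence it suffices to prove $O_o\cap A^c\subseteq O_c$. Call $v\in A^c$ a \emph{violation} if $v\in O_o\setminus O_c$, assume one exists, and pick a violation $v$ whose first occupation time $\tau$ in the first run is minimal. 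As $v\in A^c$ its state is that prescribed by $\xi_0$, and $O_c$ contains every initially occupied site of $A^c$; since $v\notin O_c$, $v$ is open, so $\tau\ge 1$ and at time $\tau$ it acquired an occupied horizontal neighbour $a$ and an occupied vertical neighbour $b$. If both $a,b\in A^c$ then each of them is either occupied initially (hence in $O_c$) or occupied strictly before time $\tau$ (hence, by minimality, not a violation, so again in $O_c$), and running the second dynamics from a time when both are present occupies $v$, contradicting $v\notin O_c$. So one of $a,b$ lies in $A$, and in particular $v$ has a neighbour in $A$.

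The heart of the matter is then the local geometry of $B$. The blocking structure is a bi-infinite up-right staircase whose maximal horizontal and vertical pieces are each one site wide and lie on the centre line of a width-$m$ rectangle containing no initially occupied site, with $A$ lying directly north of every horizontal piece and directly west of every vertical one; moreover the top site of every vertical piece is the closed site $x$ of condition (S1) of the safe block that provides that piece. The geometric claim I would establish is: \emph{apart from the $x$'s --- which are closed and never become occupied --- whenever a site of $A^c$ adjacent to $A$ becomes occupied, at least one of its two supports lies in $A^c$ on a core segment of $B$.} (One checks this case by case: a site in the interior of a horizontal core segment is adjacent to $A$ only to the north, so its horizontal support lies on that segment; a site on a vertical core segment below the top is adjacent to $A$ only to the west --- the site above, if it is the top, is the closed $x$ and cannot be a support --- so its vertical support lies on that segment; and the corners are handled similarly using the perpendicular segment.) Applying this to the minimal violation $v$, which is occupied, hence not an $x$, and has a neighbour in $A$, $v$ has a support $c'$ in $A^c$ on a core segment, occupied strictly before time $\tau$. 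But $c'$ lies inside a protective rectangle, so it is not initially occupied; and no site of $O_c$ lies on a core segment, because every cluster of $O_c$ contains an initially occupied site, has $\ell^\infty$-diameter at most $m/4$, and every initially occupied site lies at $\ell^\infty$-distance at least $(m+1)/2>m/4$ from every core segment. Hence $c'\in O_o\setminus O_c$ with $c'\in A^c$ is a violation occupied before $v$, contradicting minimality. So no violation exists, and $\xi_o$ and $\xi_c$ agree on $A^c$.

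The step I expect to be the real obstacle is verifying the geometric claims of the previous paragraph: that the L-shaped pieces contributed by the safe blocks along the path glue into a single bi-infinite separating staircase with $A$ on the correct side of every core segment, that the top of every vertical piece is exactly the closed corner site $x$, that the overlapping protective rectangles of consecutive blocks leave no core-segment site within $\ell^\infty$-distance $(m+1)/2$ of an initially occupied site, and that the two ends of every core segment are sealed, either by a closed corner or by overlap with the empty rectangle of the neighbouring block. This is exactly where the hypotheses $k\ge 3$, $m\ge 4$, the prescribed dimensions of the protective rectangles and the ``at most two consecutive steps'' condition on the path are used; by comparison the monotonicity reduction and the propagation of the minimal counterexample are routine.
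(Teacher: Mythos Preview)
The paper does not actually prove this lemma: it states that the proof is similar to Proposition~3.2 of \cite{GS2} (itself a simplified two-dimensional version of Proposition~4 of \cite{GHS}) and moves directly to the proof of Theorem~1.1. Your proposal \emph{is} that argument --- monotonicity, a minimal-time violation forced onto the $A/A^c$ interface, a support of that violation forced onto a core segment by the local geometry, and the diameter bound ruling core sites out of $O_c$ --- and you correctly flag the geometric verification (that the L-pieces glue into a genuine barrier, that segment endpoints are sealed by closed corners or by overlap with the neighbouring protective rectangle, and that the relevant core sites stay at $\ell^\infty$-distance more than $m/4$ from every initially occupied site of $A^c$) as the only non-routine step.

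Two small imprecisions in your sketch, neither of which breaks the argument. First, the blocking structure is not quite a bare staircase: each vertical piece has a tail extending below the incoming horizontal (visible as the downward stubs in Figure~2), but these tails are buried in $A^c$ and have no neighbour in $A$, so they never carry a minimal violation. Second, near the top of each vertical spike (between the height of the outgoing horizontal $H_j$ and the closed site $x_j$) the set $A$ is adjacent on \emph{both} sides, not only to the west; your conclusion is unaffected because the vertical support is still on the segment (the up-neighbour just below $x_j$ is closed, forcing the support downward onto $V_j$).
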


\begin{proof}[Proof of Theorem 1.1]
Observe that $R_{z_1}$ and $R_{z_2}$ are \new{safe} independently if $z_1$
and $z_2$ are at $\ell^\infty$-distance at least $\max(k,m)$. Therefore, by \cite{LSS} and Lemma~\ref{lemma:protection}, the random configuration $\{z: R_z\text{ is \new{safe}}\}$
dominates an independent configuration on $\bZ^2$ with density arbitrarily close to $1$
if $C$ is large enough and $p$ is small. The rest of the proof is handled
by duality arguments developed in \cite{GH1, GH2, DDGHS}. In fact, a minor modification of the argument for Theorem 3.1 in \cite{GS2} (using Lemma~\ref{lemma:blocking} in place of Proposition 3.2 in \cite{GS2}) ends the proof. 
\end{proof}

We remark that we in fact need only vertical chimneys to construct a 
blocking structure and to combine them into a bounded protective 
set. Therefore, Theorem~\ref{thm:ub} remains valid if the  
dynamics also permits occupation of an open site with north and south 
occupied neighbor. Clearly, by monotonicity, Theorem~\ref{thm:lb} also remains valid in this case.

\section{Infecting the origin}

\new{ The key to our 
proof of Theorem~\ref{thm:lb} is the fact that, 
once a closed site is surrounded by occupied sites, 
it is effectively eliminated; a more precise statement is given in
the following lemma.

\begin{lemma}\label{lemma:key} Fix $x\in\bZ^2$ and two initial configurations on $\bZ^2$. Assume that the two initial configurations
agree off $x$ and both lead to eventual occupation of all four neighbors of $x$. Then the resulting final configurations also agree off $x$.
\end{lemma}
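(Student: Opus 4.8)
The plan is to route the comparison through the auxiliary configuration obtained by \emph{occupying} $x$, and to argue that this extra occupied site does no harm once all four neighbors of $x$ have been occupied. Write the two given configurations as $\xi_0$ and $\xi_0'$, which agree on $\bZ^2\setminus\{x\}$, and let $\hat\xi_0$ be the configuration with that same restriction off $x$ and with $x$ occupied. The modified dynamics is monotone with respect to the usual order on states (occupied more permissive than open, open more permissive than closed), since closed sites are frozen, occupied sites are frozen, and an open site's update rule is increasing in the occupied-ness of its four neighbors. As $\hat\xi_0$ dominates both $\xi_0$ and $\xi_0'$ in this order, the limiting configurations satisfy $\hat\xi_\infty\succeq\xi_\infty$ and $\hat\xi_\infty\succeq\xi_\infty'$; in particular every site occupied in $\xi_\infty$ or in $\xi_\infty'$ is occupied in $\hat\xi_\infty$. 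So it will be enough to prove the reverse inclusion off $x$: for each of the two runs, every site of $\bZ^2\setminus\{x\}$ occupied in $\hat\xi_\infty$ is already occupied in that run's final configuration.

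To do this I would fix one of the two runs, call its time-$t$ configuration $\xi_t$, and let $\tau$ be the first time at which all four neighbors of $x$ are occupied in the $\xi$-dynamics (this is precisely where the hypothesis is used). Let $\tilde\xi_\tau$ be $\xi_\tau$ with $x$ additionally occupied, and evolve from there. The key claim is that $\tilde\xi_t$ and $\xi_t$ agree on $\bZ^2\setminus\{x\}$ for every $t\ge\tau$. This is immediate at $t=\tau$, and for the inductive step one considers a site $y\ne x$: if $y$ is not a neighbor of $x$, then $y$ and its four neighbors all lie off $x$, so the inductive hypothesis forces the same update in the two dynamics; and if $y$ is a neighbor of $x$, then $y$ is occupied from time $\tau$ onward in both dynamics by the definition of $\tau$, so nothing changes there either. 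Passing to the limit, the $\tilde\xi$-dynamics ends at a configuration agreeing with $\xi_\infty$ off $x$.

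Finally, since $\xi_\tau$ dominates $\xi_0$ (occupied sites only grow and closed sites are frozen), $\tilde\xi_\tau$ dominates $\hat\xi_0$ in the monotonicity order, so by monotonicity the limit of the $\tilde\xi$-dynamics dominates $\hat\xi_\infty$; combined with the previous paragraph, $\xi_\infty$ dominates $\hat\xi_\infty$ off $x$, which is exactly the reverse inclusion sought. Carrying this out for both runs yields $\hat\xi_\infty=\xi_\infty=\xi_\infty'$ on $\bZ^2\setminus\{x\}$, proving the lemma.

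I expect the only delicate point to be the key claim in the second paragraph: one has to be certain that the extra occupied site at $x$ cannot trigger a genuinely new infection somewhere off $x$, and the reason is exactly that any site which could exploit $x$ as an occupied neighbor is itself a neighbor of $x$, hence already occupied by time $\tau$. It is worth noting that the argument deliberately never tracks the state of $x$ itself, which may legitimately differ (for instance closed versus occupied) between the two runs being compared; only monotonicity and the radius-one locality of the update rule are used, so the same proof would go through for the standard rule as well.
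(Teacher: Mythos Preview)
Your proof is correct and rests on exactly the same observation as the paper's: once all four neighbors of $x$ are occupied, the state of $x$ can influence only those neighbors, hence nothing off $x$. The paper compresses this into two lines by first replacing the initial configuration with one in which the neighbors of $x$ are already occupied (legitimate since they eventually are) and then noting that $x$ is irrelevant; your route through the auxiliary $\hat\xi_0$ and the stopping time $\tau$ is a more explicit unpacking of the same reduction.
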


\begin{proof}
    The neighbors of $x$ become occupied eventually, so we may assume that they are initially occupied in both dynamics. Since no other vertex has $x$ as a neighbor, its state is now irrelevant for either of the dynamics.
\end{proof}
}

To infect the origin, we partition the lattice into square boxes of side length \new{$N =\lfloor n\cdot\frac{1}{p}\log(1/p)\rfloor$}: for $z\in\bZ^2$, $B_z=Nz+[0,N-1]^2$. Here, $n=n(p)$ is a quantity that goes to $\infty$ as $p\to 0$ and will be chosen later. 
We now call a box $S$ {\it good\/} if the following conditions hold.
\new{
\begin{itemize}
    \item[(G1)] No two closed sites in $S$ are within $\ell^\infty$-distance \new{$2\lceil n/p\rceil$}.
    \item[(G2)] Every closed site $x\in S$ has an occupied site in each of 
    the four lattice directions within distance $\lceil n/p\rceil$ \new{and inside $S$}.
    \item[(G3)] Every horizontal and vertical interval of length $\lfloor 3p^{-1}\log p^{-1}\rfloor$, included in $S$, contains an occupied site.
     \item[(G4)] 
    Every strip of width $\lfloor n p^{-1}\log p^{-1}\rfloor$ and 
    height $\lfloor n^2 p^{-1}\rfloor$, included in $S$,
    contains at most $n/4$ closed sites. 
    \item[(G5)] There are no closed sites within distance $2\lceil n/p\rceil$ from the 
    boundary of $S$.
      \item[(G6)] Every row and every column \new{within} $S$ contains at most 
      one closed site.
\end{itemize}
}
In fact, \new{(G6)} is not needed for the next lemma, but it is easy to achieve and we add it to make the proof a little simpler.

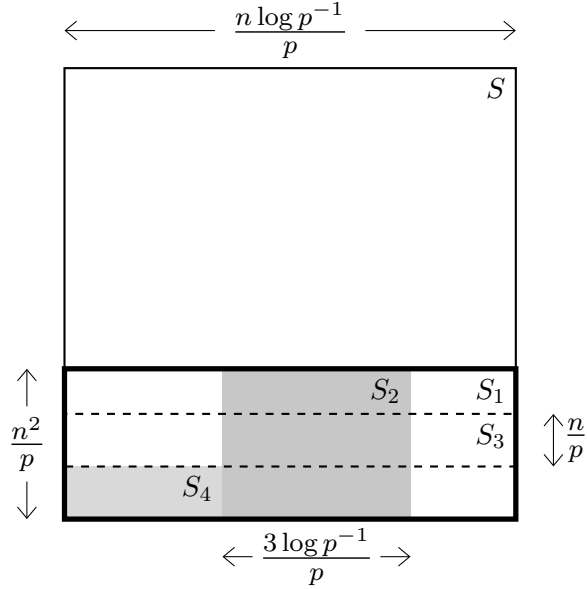
\begin{figure}
\begin{center}
\scalebox{1}{
\begin{tikzpicture}
\newcommand{\st}{Straight Barb[length=1.5mm, width=2.5mm]}

\draw[color=black, thick] (0,0) rectangle (6,6);
\node [below left] at (6,6) {$S$};
\draw[color=black, thick] (0,0) rectangle (6,2);
\node [below left] at (6,2) {$S_1$};
\fill[mgrey] (2.1,0) rectangle (4.6,2);
\node [below left] at (4.6,2) {$S_2$};
\node [below left] at (6,1.4) {$S_3$};
\fill[lgrey] (0,0) rectangle (2.1,0.7);
\node [below left] at (2.1, 0.7) {$S_4$};
\draw [ -\st] (4,6.5) -- (6,6.5);
\draw [ \st-] (0,6.5) -- (2,6.5);
\node at (3,6.5) {\scalebox{1.25}{$\frac{n\log p^{-1}}{p}$}};
\draw [ -\st] (-0.5,1.5) -- (-0.5,2);
\draw [ \st-] (-0.5,0) -- (-0.5,0.5);
\node at (-0.5,1) {\scalebox{1.25}{$\frac{n^2}{p}$}};

\draw[color=black, line width=0.07cm] (0,0) rectangle (6,2);
\draw [thick, dashed] (0,0.7) rectangle (6,1.4);

\draw [ -\st] (2.1+2.1,-0.5) -- (4.6,-0.5);
\draw [ \st-] (2.1,-0.5) -- (4.6-2.1,-0.5);
\node at (2.1+1.25,-0.5) {\scalebox{1.25}{$\frac{3\log p^{-1}}{p}$}};

\draw [ \st-\st] (6.5,0.7) -- (6.5,1.4);
\node at (6.8,1.05) {\scalebox{1.25}{$\frac{n}{p}$}};

\end{tikzpicture}
}
\end{center}
\caption{The box $S$ and its subregions. The name of each \new{subregion} is at its top \new{right} corner. \new{The outline of $S_1$ is thicker. Integer parts are again omitted.}} 
\label{fig:subregions}
\end{figure}

\new{The sites outside a box $S$ that have a neighbor in $S$ are 
divided into four sides, which we call the \em{outside boundary intervals} of $S$.}

\begin{lemma}\label{lemma:spread}
Under conditions \new{(G1)--(G6)}, if \new{an outside boundary interval of $S$} gets completely 
occupied, \new{then} every non-closed site of $S$ also gets occupied. 
\end{lemma}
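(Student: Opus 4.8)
The plan is to follow the infection as it sweeps upward through $S$ from the occupied outside boundary interval. By the $90^\circ$-rotational symmetry of the modified rule we may assume the occupied side is the bottom one, i.e.\ that all of $\{(a,-1):0\le a\le N-1\}$ is occupied, using coordinates in which $S=[0,N-1]^2$; condition (G4) enters only through the bound on the number of simultaneously active shadow columns in a row derived below, and applies verbatim with the roles of the two axes exchanged.

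First I would record what the hypotheses buy. By (G2) every closed site $x=(c,h)$ comes with an initially occupied site $N_x=(c,h+t_x)$ directly above it, $1\le t_x\le\lceil n/p\rceil$, and with initially occupied sites strictly to its left and to its right in row $h$ at horizontal distance at most $\lceil n/p\rceil$; since by (G1) the nearest other closed site is at $\ell^\infty$-distance at least $2\lceil n/p\rceil$, these left/right seeds lie in the two maximal obstacle-free segments of row $h$ flanking $c$. Call $\mathrm{Sh}(x)=\{(c,h+1),\dots,(c,h+t_x-1)\}$ the \emph{shadow} of $x$, and call a column $c$ an \emph{obstacle column of row $j$} if $(c,j)$ is a closed site or lies in some $\mathrm{Sh}(x)$. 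By (G1) the obstacle columns of a fixed row are pairwise at distance at least $2\lceil n/p\rceil$, and they correspond to closed sites lying in a horizontal strip of height at most $\lceil n/p\rceil$, hence (being contained in a strip as in (G4)) there are at most $n/4+1$ of them in any row. By (G5) the bottom $2\lceil n/p\rceil$ rows, and every column within $2\lceil n/p\rceil$ of a vertical side, are obstacle-free, and moreover $h+t_x\le h+\lceil n/p\rceil< N-1$, so every $N_x$ lies in $S$.

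The heart of the proof is to show by induction on $j$ from $0$ to $N-1$ that eventually every site of $S$ in rows $\le j$ is occupied, with the exceptions of closed sites and of shadow sites $(c,i)$ for which $h<i\le j<h+t_x$ (shadows whose top has not yet been passed). The base step $j<2\lceil n/p\rceil$ is immediate: these rows are obstacle-free and fill one at a time from the boundary interval using the seeds of (G3). For the step at row $j$, its at most $n/4+1$ obstacle columns cut it into at most $n/4+2$ segments; since $(n/4+2)\lfloor 3p^{-1}\log p^{-1}\rfloor<N=\lfloor np^{-1}\log p^{-1}\rfloor$ for small $p$, some segment has length at least $\lfloor 3p^{-1}\log p^{-1}\rfloor$ and thus contains a seed by (G3), and each segment flanking a closed site of row $j$ also contains a seed by the remark above. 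A segment bordered by a seeded (hence eventually filled) segment fills from below together with its seed, and then its bordering obstacle columns are made occupied: a shadow column $c$ of $x=(c,h)$ plugs from the top, since $(c,h+t_x-1)$ has its northern neighbour $N_x$ already occupied and, once row $h+t_x-1$ has filled around $c$, an eastern or western neighbour occupied as well, whence $(c,h+t_x-1)$, then $(c,h+t_x-2),\dots,(c,h+1)$ become occupied; and a closed column need not be crossed, since both of its flanking segments are themselves seeded. Propagating this left and right along the row gives the inductive claim at level $j$. Running the induction to $j=N-1$ and then plugging the finitely many remaining shadows (their tops $N_x$ all lying in $S$, their flanking columns all occupied, and shadows non‑interacting by (G1)) occupies every non-closed site of $S$.

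The part that needs care is exactly this within-row propagation: a short seedless segment wedged between two close obstacle columns cannot fill from below by itself, so one must process the obstacle columns and segments of a "cluster" in the right order — outward from the wide seeded segments, crossing one obstacle column at a time — exploiting that plugging a shadow column requires only \emph{one} occupied horizontal neighbour (namely the side already reached) and that $N_x$ is available from the outset because it is initially occupied. Making this ordering precise, and checking at each stage that the rows below (which sit at levels $\le h+t_x-1$) are already filled where needed, is the main obstacle; the rest is bookkeeping, facilitated by the fact that we only need occupation "eventually", so the order of updates is immaterial.
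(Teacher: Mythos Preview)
Your row-by-row induction, as stated, does not go through. The invariant ``rows $\le j$ are occupied except for closed sites and still-active shadows'' fails precisely at the step you yourself flag as ``the part that needs care'': a seedless segment of row $j$ bounded on both sides by active shadow columns cannot fill. To plug a shadow of $x=(c,h)$ you need a horizontal neighbour of $(c,h+t_x-1)$ occupied, and that site lives in row $h+t_x-1>j$, which has not yet been processed. Concretely, take closed sites $x_1=(c_1,h)$ and $x_2=(c_2,h+1)$ with $c_2-c_1$ just above $2\lceil n/p\rceil$ (allowed by (G1)) and shadows of height close to $\lceil n/p\rceil$. In row $h+2$ the segment between $c_1$ and $c_2$ is flanked by two active shadows; its length $\approx 2n/p$ is far below the $3p^{-1}\log p^{-1}$ of (G3), and (G2) only guarantees seeds in rows $h$ and $h+1$, not $h+2$. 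With no seed and both endpoints blocked, this segment does not fill at stage $j=h+2$, so the invariant fails. Your proposed fix---process clusters outward from the wide seeded segments, crossing one obstacle column at a time---would have to first push a wide segment up to row $h+t_x-1$, then plug, then return; this abandons the row-by-row structure, and making it precise runs into the same difficulty recursively (the wide segment may itself hit new obstacles before reaching row $h+t_x-1$).

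The paper takes a quite different route. Using Lemma~\ref{lemma:key}, it reduces to showing that \emph{some} closed site in $S$ has all four neighbours eventually occupied; one then converts that site to occupied and repeats, since (G1)--(G6) persist under this operation. To locate such a site, the paper works in a bottom strip $S_1$ of height $\lfloor n^2/p\rfloor$: by (G4) there is a closed-free vertical substrip $S_2$ of width at least $3p^{-1}\log p^{-1}$, which self-fills by (G3), and a closed-free horizontal substrip $S_3$ of height at least $\lceil n/p\rceil$. In the corner $S_4$ below $S_3$ and beside $S_2$ one picks the closed site $x_1$ nearest $S_2$; if the rectangle from $x_1-re_1-e_2$ down to the bottom-right of $S_4$ is closed-free, it fills from the occupied bottom boundary and from $S_2$, and then (G1), (G2), (G5) occupy the four arms around $x_1$. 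Otherwise one iterates to $x_2,x_3,\ldots$, moving strictly downward each time by (G1), so the process terminates. This argument never needs to cross an active shadow, which is exactly the obstruction your approach runs into.
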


\begin{proof} If there are no closed sites in $S$, the claim follows 
from the fact that every row and column of $S$ contains an occupied site by condition \new{(G3)}. 
Otherwise, we claim that there exists a closed site \new{in $S$}
such that all four of its neighbors eventually become occupied. 
\new{By Lemma~\ref{lemma:key}, we may eliminate this closed site by 
changing its status to occupied.}
As conditions \new{(G1)--(G6)} are preserved 
when a closed site is converted to occupied, the claim ends the proof, and in the remainder of the argument we establish this claim.

We may assume that the \new{outside interval} adjacent to the bottom of $S$ is occupied.
We denote $r=\lceil n/p\rceil$. \new{We will now define several subregions of $S$, illustrated in Figure~\ref{fig:subregions}.}
Let $S_1$ be the 
\new{$\lfloor n p^{-1}\log p^{-1}\rfloor\times \lfloor n^2 p^{-1}\rfloor$} strip on the bottom 
of $S$. We may assume that 
$S_1$ contains at least one closed site or it becomes completely occupied and we may replace $S_1$ with a vertical translate of $S_1$ by \new{$\lfloor n^2p^{-1}\rfloor$}. 
We conclude from \new{(G4)} that, within $S_1$, there is a strip $S_2$ of width at least 
$$
\frac{\new{\lfloor np^{-1}\log p^{-1}\rfloor}}{n/4+1}\ge 3p^{-1}\log p^{-1}
$$ 
and the same height as $S_1$ with no closed sites. By \new{(G3)}, $S_2$ gets 
entirely occupied. 
Also, as $$\frac{\new{\lfloor n^2p^{-1}\rfloor}}{n/4+1}>r,$$
there is a horizontal strip $S_3$ within $S_1$ of the same width as $S_1$ and height at least
$r$ that contains no closed sites. Let $S_4$ and $S_5$ be the left and right components of $S_1\setminus (S_2\cup S_3)$ below $S_3$. 
If \new{neither} of $S_4$, $S_5$ contains a closed site (in particular, if they are 
empty), then all the sites in $S$ 
up to the top of $S_3$ become occupied and 
we may again translate $S_1$ upwards and repeat the setup. Therefore, we assume, without loss of 
generality, that there is a closed site in $S_4$. (See Figure~\ref{fig:subregions}.)
\new{For any site $z\in S_4$ such that $z-e_2$ is also in $S_4$, we denote by $R(z)$ the rectangle with sides parallel to the axes, and with two of its corners at $z-e_2$ and the lower right corner of $S_4$. If $z-e_2$ is not in $S_4$, then we let $R(z)$ be the empty set.}

\begin{figure}[ht]
\begin{center}
\scalebox{0.8}{
\begin{tikzpicture}

\node at (-0.2,0.2) {\Huge\color{red}$\times$}; 
\fill[lgrey] (-2.4,-5) rectangle (3,0);
\fill[lgrey] (0,0) rectangle (3,2.4);
\fill[dgrey] (-2.4,-6) rectangle (4,-5);
\fill[dgrey] (3,-6) rectangle (4,2.4);
\draw[color=black, ultra thick](-2.4,0) rectangle (2,0.4);
\draw[color=black, ultra thick](-0.4,-2) rectangle (0,2.4);

\fill [black] (-1.5, 0.2) circle (4pt);
\fill [black] (1.8, 0.2) circle (4pt);
\fill [black] (-0.2, -1.7) circle (4pt);
\fill [black] (-0.2, 1.2) circle (4pt);

\draw[color=black, ultra thick, dashed](-2.4,-5) rectangle (3,0);
 
\end{tikzpicture}
}
\end{center}
\caption{Elimination of a closed site, marked by the red $\times$, at $x$. The dark grey region becomes occupied either by the assumption (the bottom portion) or because it is a part of $S_2$
(the right portion). The light grey region --- the dashed 
rectangle is $R(x-re_1)$ --- includes no closed site, and thus also 
becomes occupied. Each of the four arms of the cross has length $r$, includes an initially occupied site (a black circle) but no closed sites, and thus also becomes occupied.} 
\label{fig:elimination}
\end{figure}
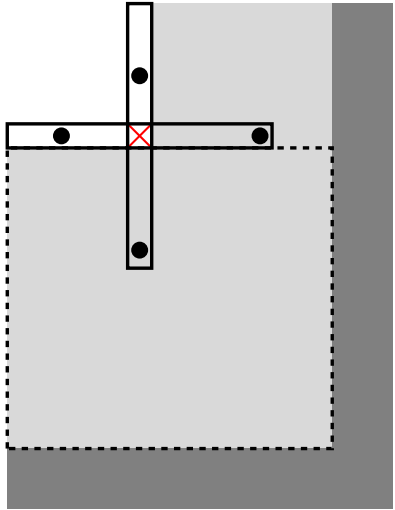

Pick the closed site  
$x_1\in S_4$ closest to 
$S_2$; note that $x_1$ is unique by \new{(G6)}. 
If $R(x_1-re_1)$ contains no closed site, then all four neighbors of $x_1$
become occupied by \new{(G1)}, \new{(G2) and (G5)}, and since the height of $S_3$ is at least $r$ (see Figure~\ref{fig:elimination}). Otherwise, choose the  unique closed site $x_2\in R(x_1-re_1)$ that is closest to $S_2$. By \new{(G1)}, $x_2$ is at distance more than $r$ from the horizontal line containing $x_1$. Then, if $R(x_2-re_1)$ contains no closed sites, all four neighbors of $x_2$
become occupied by \new{(G1)} and \new{(G2)}. Otherwise, we pick a closed site 
$x_3\in R(x_2-re_1)$, and proceed in the same fashion. As this process 
cannot continue indefinitely, we eventually find 
a closed site $x_\ell$, whose $R(x_\ell-re_1)$ contains no closed sites, and consequently the four neighbors of $x_\ell$ get occupied, as claimed.
\end{proof}

\begin{lemma}\label{lemma:good-boxes}
    If \new{$n=\lfloor\log\log p^{-1}\rfloor$} and 
    $$q\le\frac{p^2}{\log p^{-1}\cdot n^4},$$
    then the probability that a box is good goes to $1$ as $p\to 0$.
\end{lemma}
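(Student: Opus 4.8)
The plan is to bound the failure probability of each of the six conditions separately and show that each is $o(1)$ as $p\to 0$, then conclude by a union bound. Throughout I write $N=\lfloor np^{-1}\log p^{-1}\rfloor$ for the side length of the box, $r=\lceil n/p\rceil$, and I use the hypotheses $n=\lfloor\log\log p^{-1}\rfloor\to\infty$ and $q\le p^2/(\log p^{-1}\cdot n^4)$. The number of sites in $S$ is $N^2\asymp n^2 p^{-2}(\log p^{-1})^2$, so the expected number of closed sites in $S$ is $N^2 q\lesssim n^2(\log p^{-1})/n^4 = (\log p^{-1})/n^2$, which tends to infinity but only polylogarithmically — this is the quantity that controls (G5), (G6), and the ``no two close closed sites'' condition (G1).

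First I would handle the conditions that only involve closed sites. For (G1), the expected number of ordered pairs of closed sites within $\ell^\infty$-distance $2r$ is at most $N^2\cdot(4r+1)^2 q^2\lesssim n^2(\log p^{-1})^2 p^{-2}\cdot n^2 p^{-2}\cdot p^4/((\log p^{-1})^2 n^8) = 1/n^4\to 0$, so (G1) fails with probability $o(1)$ by Markov. For (G6), the expected number of rows (or columns) of $S$ containing two or more closed sites is at most $N\cdot\binom{N}{2}q^2\lesssim N^3 q^2\lesssim n^3(\log p^{-1})^3 p^{-3}\cdot p^4/((\log p^{-1})^2 n^8)=p\log p^{-1}/n^5\to 0$. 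For (G5), the boundary zone has $O(rN)=O(n^2 p^{-2}\log p^{-1})$ sites, so its expected number of closed sites is $O(n^2 p^{-2}\log p^{-1}\cdot q)=O(1/(n^2))\to 0$. Each of these is a clean first-moment estimate.

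Next come the conditions that involve occupied sites, where I would use independence of disjoint regions and the estimate $(1-p)^L\le e^{-pL}$. For (G3): a fixed interval of length $L=\lfloor 3p^{-1}\log p^{-1}\rfloor$ is free of occupied sites with probability $(1-p)^L\le e^{-pL}\le p^{3(1-o(1))}$; there are at most $2N^2\le p^{-2-o(1)}$ such intervals, so the union bound gives failure probability at most $p^{1-o(1)}\to 0$. For (G2): fix a closed site $x\in S$; the probability that some one of its four axial directions has no occupied site among the $r=\lceil n/p\rceil$ nearest sites (staying inside $S$, which is fine away from the boundary, and (G5) handles the boundary) is at most $4(1-p)^{r}\le 4e^{-n}$; summing over the (at most $N^2$) candidate sites $x$ and using that only closed $x$ matter, the expected number of violations is at most $N^2 q\cdot 4e^{-n}\lesssim (\log p^{-1})/n^2\cdot e^{-n}\to 0$ — actually even simpler, bound $\mathbb{P}(\text{(G2) fails})\le N^2\cdot 4e^{-n}=p^{-2-o(1)}e^{-n}$, and since $n=\lfloor\log\log p^{-1}\rfloor$ gives $e^{-n}\asymp 1/\log p^{-1}$, this is $p^{-2-o(1)}/\log p^{-1}$, which does \emph{not} go to zero — so here one really must condition on $x$ being closed and use the first-moment bound $N^2 q\cdot 4e^{-n}$, which does work. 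This subtlety is worth getting right.

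The main obstacle is condition (G4): for every strip $T$ of width $\lfloor np^{-1}\log p^{-1}\rfloor$ and height $\lfloor n^2 p^{-1}\rfloor$ inside $S$, the number of closed sites in $T$ is at most $n/4$. Here $|T|\asymp n^3 p^{-2}\log p^{-1}$, so $\mathbb{E}[\#\{\text{closed in }T\}]=|T|q\lesssim n^3(\log p^{-1})/n^4=(\log p^{-1})/n\to\infty$, which is \emph{larger} than the allowed threshold $n/4$ — wait, we need $(\log p^{-1})/n \le n/4$, i.e.\ $n^2\gtrsim \log p^{-1}$, which is false. So the first-moment bound is insufficient and one must instead observe that the relevant strips are those that actually arise, or re-examine the constant: with $q\le p^2/(\log p^{-1}\cdot n^4)$ we get $|T|q\lesssim n^3 p^{-2}\log p^{-1}\cdot p^2/(\log p^{-1}n^4)=C/n$, which is $o(1)$, hence far below $n/4$. (I had mis-multiplied; $|T|\asymp (np^{-1}\log p^{-1})(n^2 p^{-1})=n^3 p^{-2}\log p^{-1}$ and $q$ carries a full $(\log p^{-1})^{-1}$, so the log cancels.) Thus the expected number of closed sites in a single strip is $O(1/n)\to 0$, and a Chernoff/Markov bound gives that $\mathbb{P}(\#\ge n/4)\le (C/n)^{n/4}/(n/4)!$ or simply, via Markov on $\binom{\#}{\lceil n/4\rceil}$, at most $(|T|q)^{n/4}/(n/4)!\le (C/n)^{n/4}$, which is super-polynomially small in $p^{-1}$; multiplying by the number of strips ($\le N^2\le p^{-2-o(1)}$) still gives $o(1)$ since $(C/n)^{n/4}$ with $n\asymp\log\log p^{-1}$ beats any power of $p^{-1}$. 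So the plan for (G4) is: compute $|T|q=O(1/n)$, apply a Poisson-type tail bound to a single strip, then union-bound over the polynomially-many strips. This step — verifying the tail bound decays fast enough to survive the union bound, which hinges on $(1/n)^{n}$ decaying faster than any polynomial in $p^{-1}$ — is where the choice $n=\lfloor\log\log p^{-1}\rfloor$ is used most delicately, and it is the one place where a careless estimate would break the argument. Combining all six bounds via a union bound completes the proof.
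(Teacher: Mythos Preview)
Your overall strategy---bounding the failure probability of each of (G1)--(G6) by a first-moment or tail estimate and then taking a union bound---is exactly the paper's approach, and your bounds for (G1), (G2), (G3), (G5), (G6) are correct and essentially match the paper's computations (including your observation for (G2) that one must weight by $q$ rather than union-bound over all $N^2$ sites).

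However, your treatment of (G4) contains a genuine error. You assert that $(C/n)^{n/4}$ with $n=\lfloor\log\log p^{-1}\rfloor$ ``beats any power of $p^{-1}$''. This is false: taking logarithms, $(n/4)\log n\sim \tfrac14(\log\log p^{-1})(\log\log\log p^{-1})$, which is far smaller than $\log p^{-1}$, so $(C/n)^{n/4}$ does not even beat $p^{-1}$, let alone $p^{-2}$. Consequently your union bound over ``$\le N^2\le p^{-2-o(1)}$ strips'' diverges badly. You have also overcounted the strips: since the strip width equals the full box width $N$, the only freedom is the vertical position.

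The fix is twofold. First, reduce the strip count to the order of $N/\lfloor n^2 p^{-1}\rfloor\asymp(\log p^{-1})/n$ by considering disjoint strips (this suffices for the application in Lemma~\ref{lemma:spread}; alternatively bound disjoint strips by $n/8$ and observe any strip overlaps at most two of them). Second, and this is the genuinely delicate step you flagged but then mishandled, use the defining relation $\log p^{-1}\le e^{n+1}$ to absorb the strip count into the tail bound:
\[
\frac{\log p^{-1}}{n}\cdot\Bigl(\frac{C}{n}\Bigr)^{n/4}
\;\le\; e\cdot e^{n}\cdot n^{-n/4}
\;=\; e\Bigl(\frac{e^4}{n}\Bigr)^{n/4}\longrightarrow 0.
\]
This is precisely how the choice $n=\lfloor\log\log p^{-1}\rfloor$ enters: it is \emph{not} that the tail bound beats polynomials in $p^{-1}$, but that the number of relevant strips is only $e^{n}$, which the factor $n^{-n/4}$ can absorb.
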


\begin{proof}
We have, for some constant $C$, 
\begin{equation*}
\prob{\text{\new{(G1)} fails}}\le C\left(\left(\frac np\right)^2q\right)^2
\cdot \left(\frac{n\log p^{-1}}{p}\cdot \frac pn\right)^2
=C\left(\frac{n^2\log p^{-1}}{p^2}\cdot q\right)^2\le\frac{C}{n^{4}}\to 0.
\end{equation*}
Next, 
\begin{equation*}
\prob{\text{\new{(G2)} fails}}\le 
\left(\frac{n\log p^{-1}}{p}\right)^2\cdot q\cdot 4(1-p)^{n/p}
\le 4\,\frac{\log p^{-1}}{n^2}e^{-n}\le 
4\new{e}\,\frac{1}{n^2}
\to 0.
\end{equation*}
Next, again for some constant $C$,
\begin{equation*}
\begin{aligned}
\prob{\text{\new{(G3)} fails}}&\le C
\left(\frac{n\log p^{-1}}{p}\cdot \frac{n\log p^{-1}}{p}\cdot \frac{p}{\log p^{-1}}\right)\cdot (1-p)^{3\log p^{-1}/p}\\
&\le C\,\frac{n^2 \log p^{-1}}{p}e^{-3\log p^{-1}}=
Cn^2p^2\log p^{-1}
\to 0.
\end{aligned}
\end{equation*}
Next, for some constant $C$,
\begin{equation*}
\begin{aligned}
\prob{\text{\new{(G4)} fails}}&\le C
\left(\frac{n\log p^{-1}}{p}\cdot   \frac{p}{n^2}\right)\cdot  \left( 
 \frac{n^3\log p^{-1}}{p^2} \cdot q \right)^{n/4}\\
&\le C\log p^{-1}\cdot \left(\frac{1}{n}\right)^{n/4} \\
&\le C\left(\frac{e^{4}}{n}\right)^{n/4}
\to 0.
\end{aligned}
\end{equation*}
Finally, 
\begin{equation*}
\begin{aligned}
\prob{\text{\new{(G5)} fails}}\le
\new{C}\,
\left(\frac{n}{p}\right)^2\log p^{-1}\cdot  q\le Cn^{-2}
\to 0.
\end{aligned}
\end{equation*}
and 
\begin{equation*}
\begin{aligned}
\prob{\text{\new{(G6)} fails}}&\le C\,
\frac{n\log p^{-1}}{p}\cdot  \left(\frac{n\log p^{-1}}{p}\cdot q\right)^2 
\le \log p^{-1}\cdot p 
\to 0.
\end{aligned}
\end{equation*}
\end{proof}

\begin{proof}[Proof of Theorem~\ref{thm:lb}]
Note that the boxes $B_z$ are good independently for different $z$. 
Therefore, by Lemma~\ref{lemma:good-boxes}, with probability converging to $1$ as $p\to 0$,  the set $\{z\in\bZ^2:B_z\text{ is good}\}$ contains an infinite connected (in the usual $\bZ^2$ sense) set $\cC$
that includes the origin. Also with probability converging to $1$ as $p\to 0$,  $(0,0)\in R_0$ is not closed. With probability 1, there 
is a site $z\in\cC$ such that $R_z$ is initially completely occupied.
By Lemma~\ref{lemma:spread}, the origin eventually becomes occupied.
\end{proof}

\section*{Acknowledgments} \new{
The starting point of this work was a discussion between SL and Marek Biskup on percolation models that relate to modified bootstrap percolation. We thank Marek for many constructive suggestions.
JG was partially
supported by the Slovenian Research Agency research program
P1-0285 and  Simons Foundation Award \#709425. AEH is supported in part by a Royal Society Wolfson Fellowship. DS was partially supported by the NSF TRIPODS grant CCF--1740761 and the Simons Foundation.}


\begin{thebibliography}{99}

\bibitem[AL]{AL}
M.~Aizenman and J.~L.~Lebowitz.
\newblock Metastability effects in bootstrap percolation.
\newblock {\em Journal of Physics A: Mathematical and General} 21 (1988), 3801--3813.



\bibitem[BBDM]{BBDM}
J.~Balogh, B.~Bollob\'as, H.~Duminil{-}Copin, and R.~Morris.
\newblock The sharp threshold for bootstrap percolation in all dimensions.
\newblock {\em Transactions of the American Mathematical Society}, 364 (2012), 2667--2701.

\bibitem[BDMS]{BDMS}
B.~Bollob\'as, H.~Duminil{-}Copin, R.~Morris, and P.~Smith.
\newblock Universality of two-dimensional critical cellular automata.
\newblock {\em Proceedings of London Mathematical Society} 
126 (2023), 620--703.

\bibitem[{BBMS1}]{BBMS1}
P.~N.~Balister, B.~Bollob{\'a}s, R.~Morris, P.~Smith, 
Universality for monotone cellular automata. 
arXiv:2203.13806.


\bibitem[{BBMS2}]{BBMS2}
P.~N.~Balister, B.~Bollob{\'a}s, R.~Morris, P.~Smith, 
\newblock The critical length for growing a droplet. 
arXiv:2203.13808.

\bibitem[CLR]{CLR}
J.~Chalupa, P.~L. Leath, and G.~R. Reich.
\newblock Bootstrap percolation on a {B}ethe lattice.
\newblock {\em Journal of Physics C: Solid State Physics}, 12 (1979), L31.


\bibitem[DDG+]{DDGHS}
N.~Dirr, P.~W. Dondl, G.~R. Grimmett, A.~E. Holroyd, and M.~Scheutzow.
\newblock Lipschitz percolation.
\newblock {\em Electronic Communications in  Probability}, 15 (2010), 
14--21.

\bibitem[DYZ]{DYZ}
J.~Ding, P.~Yang, Z.~Zhuang.
\newblock Dynamical random field Ising model at zero temperature.
arXiv:2410.20457.






%


\bibitem[GH1]{GH1}
G.~R. Grimmett and A.~E. Holroyd.
\newblock Plaquettes, spheres, and entanglement.
\newblock {\em Electronic Journal of  Probability}, 15 (2010) 
1415--1428, 2010.


\bibitem[GH2]{GH2}
G.~R. Grimmett and A.~E. Holroyd.
\newblock Geometry of {L}ipschitz percolation.
\newblock {\em Annales de l'institut Henri Poincar\'e (B) Probability and Statistics}, 48 (2012), 309--326.




\bibitem[GHM]{GHM}
J.~Gravner, A.~E. Holroyd, and R.~Morris.
\newblock A sharper threshold for bootstrap percolation in two dimensions.
\newblock {\em Probability Theory and Related Fields}, 153 (2012), 1--23.

\bibitem[Gho]{Gho}
A.~Ghosh,
\newblock {\em Polluted $\cU$-bootstrap percolation\/}, 
Ph.D. Thesis (2022), The Ohio State University, ProQuest Dissertation Publishing 30360081.

\bibitem[GHS]{GHS}
J.~Gravner, A.~E. Holroyd, and D.~Sivakoff.
\newblock Polluted bootstrap percolation in three dimensions,
\newblock {\em Annals of Applied Probability} 31 (2021), 218--246.


\bibitem[GM]{GM}
J.~Gravner and E.~McDonald.
\newblock Bootstrap percolation in a polluted environment.
\newblock {\em Journal of Statistical Physics}, 87 (1997), 915--927.

\bibitem[{GraH}]{GraH} J.~Gravner, A.~E.~Holroyd.
\newblock Polluted bootstrap percolation with threshold 2 in all
dimensions.
\newblock {\it Probability Theory and Related Fields\/} 
175 (2019), 467--486.

\bibitem[GS]{GS2}
J.~Gravner and D.~Sivakoff.
\newblock Bootstrap percolation on the product of the two-dimensional lattice with a Hamming square. 
\newblock {\em Annals of Applied Probability} 30 (2020), 145--174. 

 \bibitem[Har]{Har} I.~Hartarsky. 
 \newblock Sensitive bootstrap percolation second term.
 \newblock {\em Electronic Communications in  Probability},  28 (2023), 1--7. 

\bibitem[HM]{HM} 
\newblock I.~Hartarsky and R.~Morris. The second term for two-neighbour bootstrap percolation in two
dimensions. 
\newblock {\em Transactions of the American Mathematical Society} 372 (2019), 6465--6505. 

\bibitem[Hol]{Hol1}
A.~E. Holroyd.
\newblock Sharp metastability threshold for two-dimensional bootstrap
  percolation.
\newblock {\em Probability Theory and Related Fields}, 125 (2003), 195--224.

\bibitem[HT]{HT} 
\newblock I.~Hartarsky and A.~Teixeira.  Bootstrap percolation is local.
\newblock {\tt arXiv:2404.07903}. 

\bibitem[LSS]{LSS}
T.~M. Liggett, R.~H. Schonmann, and A.~M. Stacey.
\newblock Domination by product measures.
\newblock {\em Annals of Probability}, 25 (1997), 71--95.

\bibitem[Mor]{Mor}
R.~Morris.
\newblock Bootstrap percolation, and other automata.
\newblock {\em European Journal of Combinatorics}
66 (2017), 250--263.

\bibitem[vE]{vE}
A.~C.~D. van Enter.
\newblock Proof of {S}traley's argument for bootstrap percolation.
\newblock {\em Journal of Statistical Physics}, 48 (1987), 943--945.





 


\end{thebibliography}
\end{document}